\newtheorem{theorem}{Theorem}
\newtheorem{lemma}[theorem]{Lemma}
\newtheorem{proposition}[theorem]{Proposition}
\newtheorem{remark}[theorem]{Remark}
\newtheorem*{itheorem}{Theorem}
\newcommand{\acat}[1]{\mathsf{a}_{#1}}
\newcommand{\cacat}[1]{\widehat{\mathsf{a}}_{#1}}
\newcommand{\lcat}{\mathsf{l}}
\newcommand{\defm}[1]{\mathsf{Def}_{#1}}
\newcommand{\fm}[1]{\mathsf{#1}}
\newcommand{\sets}{\mathsf{Sets}}
\DeclareMathOperator{\fchar}{char}
\DeclareMathOperator{\der}{Der}
\DeclareMathOperator{\enm}{End}
\DeclareMathOperator{\ext}{Ext}
\DeclareMathOperator{\gr}{gr}
\DeclareMathOperator{\hmm}{Hom}
\DeclareMathOperator{\id}{id}
\DeclareMathOperator{\im}{im}
\DeclareMathOperator{\mor}{Mor}
\DeclareMathOperator*{\osum}{\oplus}
\DeclareMathOperator{\rad}{J}
\DeclareMathOperator{\Simp}{Simp}
\begin{document}

\title[The Algebra of Observables]{The Algebra of Observables in
Noncommutative Deformation Theory}

\author{Eivind Eriksen}
\address{BI Norwegian Business School, Department of Economics, N-0442
Oslo, Norway}
\email{eivind.eriksen@bi.no}

\author{Arvid Siqveland}
\address{University of South-Eastern Norway, Faculty of Technology,
Natural Sciences and Maritime Sciences, N-3603 Kongsberg, Norway}
\email{arvid.siqveland@usn.no}

\subjclass[2010]{Primary 14D15 }
\keywords{Representation theory; Noncommutative deformation theory}

\date{\today}

\begin{abstract}
We consider the algebra $\mathcal O(\fm M)$ of observables and the (formally)
versal morphism $\eta: A \to \mathcal O(\fm M)$ defined by the noncommutative
deformation functor $\defm{\fm M}$ of a family $\fm M = \{ M_1, \dots, M_r
\}$ of right modules over an associative $k$-algebra $A$. By the Generalized
Burnside Theorem, due to Laudal, $\eta$ is an isomorphism when $A$ is finite
dimensional, $\fm M$ is the family of simple $A$-modules, and $k$ is an
algebraically closed field. The purpose of this paper is twofold: First, we
prove a form of the Generalized Burnside Theorem that is more general, where
there is no assumption on the field $k$. Secondly, we prove that the
$\mathcal O$-construction is a closure operation when $A$ is any finitely
generated $k$-algebra and $\fm M$ is any family of finite dimensional
$A$-modules, in the sense that $\eta_B: B \to \mathcal O^B(\fm M)$ is an
isomorphism when $B = \mathcal O(\fm M)$ and $\fm M$ is considered as a
family of $B$-modules.
\end{abstract}

\maketitle

\section{Introduction}

Let $k$ be a field, let $A$ be a finite dimensional associative algebra over
$k$, and let $\fm M = \{ M_1, \dots, M_r \}$ be the family of simple right
$A$-modules, up to isomorphism. We consider the algebra homomorphism
    \[ \rho: A \to \osum_{i=1}^r \enm_k(M_i) \]
given by right multiplication of $A$ on the family $\fm M$. By the extended
version of the classical Burnside Theorem, $\rho$ is surjective when $k$ is
algebraically closed, and if $A$ is semisimple, then it is an isomorphism.
We remark that Artin-Wedderburn theory gives a version of the theorem that
holds over any field:

\begin{itheorem}[Classical Burnside Theorem]
Let $A$ be a finite dimensional $k$-algebra, and let $\{ M_1, \dots, M_r \}$
be the family of simple right $A$-modules. If $\enm_A(M_i) = k$ for $1 \le i
\le r$, then $\rho: A \to \osum_i \, \enm_k(M_i)$ is surjective.
\end{itheorem}

In Laudal \cite{laud02}, a generalization called the Generalized Burnside
Theorem was obtained. This is a structural result for not necessarily
semisimple algebras, and the essential idea of Laudal was to replace $\rho$
with the versal morphism $\eta$ defined by noncommutative deformations of
modules. Let us recall the construction:

Let $A$ be an arbitrary associative $k$-algebra, let $\fm M = \{ M_1, \dots, M_r \}$
be a family of right $A$-modules, and consider the noncommutative deformation
functor $\defm{\fm M}$. This functor has a pro-representing hull $H$ and a
versal family $M_H$ if $\fm M$ is a swarm. Following Laudal \cite{laud02}, we
define the \emph{algebra of observables} of a swarm $\fm M$ to be $\mathcal
O(\fm M) = \enm_H(M_H) \cong ( H_{ij} \otimes_k \hmm_k(M_i,M_j) )$, and its
\emph{versal morphism} to be the algebra homomorphism $\eta: A \to \mathcal
O(\fm M)$ given by right multiplication of $A$ on the versal family $M_H$. It
fits into the commutative diagram
\begin{equation*}
\xymatrix{
A \ar[rrr]^-{\eta} \ar[rrrd]_{\rho} &&& ( H_{ij} \otimes_k \hmm_k(M_i,M_j) )
\ar[d] \\
&&& \osum_{i=1}^r \enm_k(M_i)
}
\end{equation*}
where $\rho: A \to \osum_{i=1}^r \enm_k(M_i)$ is the algebra homomorphism
given by right multiplication of $A$ on the family $\fm M$. By Theorem 1.2
in Laudal \cite{laud02}, it follows that $\eta$ is an isomorphism when $A$
is finite dimensional, $\fm M$ is the family of simple $A$-modules, and $k$
is algebraically closed. In this paper, we prove a more general version of
this result:

\begin{itheorem}[Generalized Burnside Theorem]
Let $A$ be a finite dimensional $k$-algebra, and let $\fm M$ be the family
of simple right $A$-modules, up to isomorphism. The versal morphism $\eta: A
\to \mathcal O(\fm M)$ is injective. If $\enm_A(M_i) = k$ for $1 \le i \le
r$, then $\eta$ is an isomorphism. In particular, $\eta$ is an isomorphism
if $k$ is algebraically closed.
\end{itheorem}

In case $D_i = \enm_A(M_i)$ is a division algebra with $\dim_k D_i > 1$ for
some simple module $M_i$, it is often not difficult to describe the image of
$\eta$ as a subalgebra of $\mathcal O(\fm M)$, and we shall give examples.
As an application of the theorem, we introduce the standard form of any
finite dimensional algbra $A$, given as
    \[ A \cong \mathcal O(\fm M) = ( H_{ij} \otimes_k \hmm_k(M_i,M_j) ) \]
when $\enm_A(M_i) = k$ for $1 \le i \le r$, or as a subalgebra of $\mathcal
O(\fm M)$ in general.

Let $A$ be any finitely generated $k$-algebra and let $\fm M$ be any family
of finite dimensional right $A$-modules. In this more general situation, the
versal morphism $\eta: A \to \mathcal O(\fm M)$ is not necessarily an
isomorphism. However, we may consider the algebra $B = \mathcal O(\fm M)$ of
observables, and $\fm M$ as a family of right $B$-modules, and iterate the
process. We prove that the operation $(A, \fm M) \mapsto (B, \fm M)$ has the
following \emph{closure property}:

\begin{itheorem}[Closure Property]
Let $A$ be a finitely generated $k$-algebra, let $\fm M$ be a family of
finite dimensional $A$-modules, and let $B = \mathcal O(\fm M)$. Then the
versal morphism $\eta^B: B \to \mathcal O^B(\fm M)$ of $\fm M$, considered
as a family of right $B$-modules, is an isomorphism.
\end{itheorem}

One may consider a noncommutative algebraic geometry where the closed points
are represented by simple modules; see for instance Laudal \cite{laud03}.
With this point of view, one may use versal morphisms $\eta: A \to \mathcal
O(\fm M)$ for families $\fm M$ of $A$-modules to construct noncommutative
localization homomorphisms $\eta_s: A \to A_s$ for any $s \in A$. We
explain this construction in Section \ref{s:ncloc}. These localization maps
are universal $S$-inverting localization maps, where $S = \{ 1, s, s^2, \dots
\}$, and can be used as an essential building block for structure sheaves on
noncommutative schemes.

\section{Noncommutative deformations of modules}

Let $A$ be an associative algebra over a field $k$. For any right $A$-module
$M$, there is a \emph{deformation functor} $\defm M: \lcat \to \sets$ defined
on the category $\lcat$ of commutative Artinian local $k$-algebras $R$ with
residue field $k$. We recall that $\defm M(R)$ is the set of equivalence
classes of pairs $(M_R,\tau_R)$, where $M_R$ is an $R$-flat $R$-$A$ bimodule
on which $k$ acts centrally, and $\tau_R: k \otimes_R M_R \to M$ is an
isomorphism of right $A$-modules. Deformations in $\defm M(R)$ are called
\emph{commutative deformations} since the base ring $R$ is commutative.

\emph{Noncommutative deformations} were introduced in Laudal \cite{laud02}.
The deformations considered by Laudal are defined over certain noncommutative
base rings instead of the commutative base rings in $\lcat$. In what follows,
we shall give a brief account of noncommutative deformations of modules. We
refer to Laudal \cite{laud02}, Eriksen \cite{erik06} and Eriksen, Laudal,
Siqveland \cite{erik-laud-siqv17} for further details.

For any positive integer $r$ and any family $\fm M = \{ M_1, \dots, M_r \}$
of right $A$-modules, there is a \emph{noncommutative deformation functor}
$\defm{\fm M}: \acat r \to \sets$, defined on the category $\acat r$ of
noncommutative Artinian $r$-pointed $k$-algebras with exactly $r$ simple
modules (up to isomorphism). We recall that an $r$-pointed $k$-algebra $R$ is
one fitting into a diagram of rings $k^r \to R \to k^r$, where the composition
is the identity. The condition that $R$ has exactly $r$ simple modules holds
if and only if $\overline R \cong k^r$, where $\overline R = R/J(R)$ and
$J(R)$ denotes the Jacobson radical of $R$.

The noncommutative deformations in $\defm{\fm M}(R)$ are equivalence classes
of pairs $(M_R, \tau_R)$, where $M_R$ is an $R$-flat $R$-$A$ bimodule on
which $k$ acts centrally, and $\tau_R: k^r \otimes_R M_R \to M$ is an
isomorphism of right $A$-modules with $M = M_1 \oplus \dots \oplus M_r$. In
concrete terms, an algebra $R$ in $\acat r$ is a matrix ring $R = ( R_{ij} )$
with $R_{ij} = e_i R e_j$. By abuse of notation, we write $e_i$ for the
idempotent $e_i = (0,0, \dots, i, \dots, 0)$ in $k^r$, and also for its image
in $R$ via the structural map $k^r \to R$. As left $R$-modules, we have that
$M_R \cong ( R_{ij} \otimes_k M_j )$ and its right $A$-module structure is
given by an algebra homomorphism
    \[ \eta_R: A \to \enm_R(M_R) \cong ( R_{ij} \otimes_k \hmm_k(M_i,M_j) )
    \]
that lifts $\rho: A \to \osum_i \, \enm_k(M_i)$. Explicitly, we interpret
$\eta_R(a)$ as a right action of $a$ on $M_R$ via
    \[ \eta_R(a) = \sum_i e_i \otimes \rho_i + \sum_{i,j,l} r_{ij}^l \otimes
    \phi_{ij}^l \quad \Longleftrightarrow \quad
    (e_i \otimes m_i)a = e_i \otimes (m_i a) + \sum_{j,l} \, r_{ij}^l \otimes
    \phi_{ij}^l(m_i) \]
where $\rho_i: A \to \enm_k(M_i)$ is the algebra homomorphism given by the
right action of $A$ on $M_i$, such that $\rho = (\rho_1, \dots, \rho_r)$, and
where $r_{ij}^l \in R_{ij}$ and $\phi_{ij}^l \in \hmm_k(M_i,M_j)$. Deformations
in $\defm{\fm M}(R)$ can therefore be represented by commutative diagrams
\begin{equation*}
\xymatrix{
A \ar[rrr]^-{\eta_R} \ar[rrrd]_{\rho} &&& (R_{ij} \otimes_k \hmm_k(M_i,M_j))
\ar[d] \\
&&& \osum_{i=1}^r \enm_k(M_i)
}
\end{equation*}
These deformations are called \emph{noncommutative deformations} since the
base ring $R$ is noncommutative.

For any $r$-pointed algebra $R$, with structural maps $k^r \to R \to k^r$, we
write $I(R) = \ker(R \to k^r)$. Recall that the pro-category $\cacat r$ is the
full subcategory of the category of $r$-pointed algebras consisting of
algebras $R$ such that $R/I(R)^n$ is Artinian for all $n$ and such that $R$ is
complete in the $I(R)$-adic topology.

The family $\fm M = \{ M_1, \dots, M_r \}$ is called a \emph{swarm} if $\dim_k
\ext^1_A(M,M)$ is finite. In this case, the noncommutative deformation functor
$\defm{\fm M}$ has a pro-representing hull $H$ in the pro-category $\cacat r$
and a versal family $M_H \in \defm{\fm M}(H)$; see Theorem 3.1 in Laudal
\cite{laud02}. The defining property of the miniversal pro-couple $(H, M_H)$
is that the induced natural transformation
    \[ \phi: \mor(H,-) \to \defm{\fm M} \]
on $\acat r$ is smooth (which implies that $\phi_R$ is surjective for any $R$
in $\acat r$), and that $\phi_R$ is an isomorphism when $J(R)^2 = 0$. The
miniversal pro-couple $(H, M_H)$ is unique up to (non-canonical) isomorphism.

Let $\fm M$ be a swarm of right $A$-modules, and let $(H,M_H)$ be the
miniversal pro-couple of the noncommtutative deformation functor $\defm{\fm
M}: \acat r \to \sets$. We define the \emph{algebra of observables} of $\fm M$
to be
    \[ \mathcal O(\fm M) = \enm_H(M_H) \cong ( H_{ij} \widehat \otimes_k
    \hmm_k(M_i,M_j) ) \]
where $\widehat{\otimes}$ is the completed tensor product (the completion of
the tensor product), and write $\eta: A \to \mathcal O(\fm M)$ for the induced
\emph{versal morphism}, giving the right $A$-module structure on $M_H$. By
construction, it fits into the commutative diagram
\begin{equation*}
\xymatrix{
A \ar[rrr]^-{\eta} \ar[rrrd]_{\rho} &&& (H_{ij} \widehat \otimes_k
\hmm_k(M_i,M_j)) \ar[d] \\
&&& \osum_{i=1}^r \enm_k(M_i)
}
\end{equation*}

\begin{remark}
Notice that the diagram extends the right action of $A$ on the family $\fm M$
to a right action of $\mathcal O(\fm M)$, such that $\fm M$ is a family of
right $\mathcal O(\fm M)$-modules.
\end{remark}

\begin{remark}
For any $R$ in $\acat r$ and any deformation $M_R \in \defm{\fm M}(R)$, there
is a morphism $u: H \to R$ in $\cacat r$ such that $\defm{\fm M}(u)(M_H) =
M_R$ by the versal property, and the deformation $M_R$ is therefore given by
the composition $\eta_R = u^* \circ \eta$ in the diagram
\begin{equation*}
\xymatrix{
A \ar[rr]^-{\eta} \ar[rrd]_{\eta_R} && \mathcal O(\fm M) \ar[d]^{u^* = u
\otimes \id} \\
&& ( R_{ij} \otimes_k \hmm_k(M_i,M_j) )
}
\end{equation*}
In this sense, the versal morphism $\eta: A \to \mathcal O(\fm M)$ determines
all noncommutative deformations of the family $\fm M$.
\end{remark}

\section{Iterated extensions and injectivity of the versal morphism}
\label{s:mmp} \label{s:gmp}

Let $E$ be a right $A$-module and let $r \ge 1$ be a positive integer. If $E$
has a \emph{cofiltration} of length $r$, given by a sequence
    \[ E = E_r \xrightarrow{f_r} E_{r-1} \to \dots \to E_2 \xrightarrow{f_2}
    E_1 \xrightarrow{f_1} E_0 = 0 \]
of surjective right $A$-module homomorphisms $f_i: E_i \to E_{i-1}$, then we
call $E$ an \emph{iterated extension} of the right $A$-modules $M_1, M_2,
\dots M_r$, where $M_i = \ker(f_i)$. In fact, the cofiltration induces short
exact sequences
    \[ 0 \to M_i \to E_i \xrightarrow{f_i} E_{i-1} \to 0 \]
for $1 \le i \le r$. Hence $E_1 \cong M_1$, $E_2$ is an extension of $E_1$
by $M_2$, and in general, $E_i$ is an extension of $E_{i-1}$ by $M_i$.

Let $\fm M = \{ M_1, \dots, M_r \}$ be a swarm of right $A$-modules, and let
$\defm{\fm M}: \acat r \to \sets$ be its noncommutative deformation functor.
Then $\defm{\fm M}$ has a miniversal pro-couple $(H, M_H)$, and we consider
the induced versal morphism $\eta: A \to \mathcal O(\fm M)$ and its kernel
$K = \ker(\eta)$.

We note that Theorem 3.2 in Laudal \cite{laud02} holds without assumptions
on the base field $k$, since the construction that precedes this theorem
works over any field. From this observation, we obtain the following lemma:

\begin{lemma} \label{l:ek}
Let $\fm M$ be a swarm of right $A$-modules. For any iterated extension $E$
of the family $\fm M$, we have that $E \cdot K = 0$.
\end{lemma}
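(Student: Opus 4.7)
The plan is to combine Corollary~\ref{c:itext-van} with the versal property of $(H, M_H)$. Let $\{\psi^{ij}\}$ be the family of $1$-cochains describing the $A$-action on the iterated extension $E$, as in the discussion preceding Proposition~\ref{p:itext-mmp}, and set $\alpha_{ij} = (-1)^{j-i+1}\psi^{ij}$, so that $\{\alpha_{ij} : 1 \le i < j \le r\}$ is precisely the family singled out in Corollary~\ref{c:itext-van}. By that corollary, it suffices to check that $M_i \cdot K = 0$ for all $i$ and $\alpha_{ij}(K) = 0$ for all $1 \le i < j \le r$. The first set of vanishings is immediate: $\rho$ factors through $\eta$ via the canonical projection $\mathcal O(\fm M) \to \osum_i \enm_k(M_i)$, so $K = \ker(\eta) \subseteq \ker(\rho)$ and hence $M_i \cdot K = 0$.

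For the vanishings $\alpha_{ij}(K) = 0$, I would realize $E$ as a genuine noncommutative deformation of $\fm M$ over a concrete finite-dimensional $R \in \acat r$ and then invoke the versal property. Concretely, take $R$ to be the $r$-pointed $k$-algebra with $R_{ii} = k$, $R_{ij} = k \cdot t_{ij}$ for $i < j$ and $R_{ij} = 0$ for $i > j$, with multiplication $t_{ij}t_{jl} = t_{il}$ for $i < j < l$. Then $R$ is associative, finite dimensional, and $\overline{R} \cong k^r$, so $R \in \acat r$. Define $\eta_R : A \to (R_{ij} \otimes_k \hmm_k(M_i, M_j))$ by
\[ \eta_R(a) = \sum_i e_i \otimes \rho_i(a) + \sum_{i<j} t_{ij} \otimes \alpha_{ij}(a). \]
The corresponding flat bimodule $M_R = R \otimes_{k^r} M$ is then a deformation in $\defm{\fm M}(R)$, so the versal property supplies a morphism $u : H \to R$ in $\cacat r$ with $\eta_R = u^* \circ \eta$. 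Consequently $\eta_R(K) = u^*(\eta(K)) = 0$, and reading off the $(i,j)$-component for $i < j$ gives $\alpha_{ij}(K) = 0$. Corollary~\ref{c:itext-van} then yields $E \cdot K = 0$.

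The main obstacle will be checking that $\eta_R$ is genuinely a $k$-algebra homomorphism lifting $\rho$. Unpacking the $(i,l)$-component of the identity $\eta_R(a)\eta_R(b) = \eta_R(ab)$ using the rule $t_{ij}t_{jl} = t_{il}$ produces, after absorbing the sign in $\alpha_{ij} = (-1)^{j-i+1}\psi^{ij}$, exactly the defining-system equations $d(\alpha_{ij}) = \widetilde{\alpha}_{ij}$ for $1 \le i < j \le r$, including $(i,j)=(1,r)$; these hold by the construction of $E$ recorded in Section~\ref{s:mmp}. With this bookkeeping done, associativity of $R$ and flatness of $M_R$ via the base-change description $R \otimes_{k^r} M$ are routine, and the plan goes through.
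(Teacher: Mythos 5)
Your proof is correct, but it takes a genuinely different route from the paper's. The paper verifies condition (2) of Corollary~\ref{c:itext-van} by constructing the hull $H$ explicitly, level by level, via the obstruction morphism and the generalized matric Massey products $M^o_n$: it identifies $K_n = \ker(\eta_n)$ with the common kernel of all cochains used in building the versal family up to level $n$, and observes that the cochains of the given defining system occur among these because $M^o_n(\alpha_{i,i+1}\otimes\dots\otimes\alpha_{j-1,j})=0$. You instead package the entire iterated extension $E$ as a single deformation over the incidence algebra $R$ of upper triangular $r\times r$ matrices and invoke versality once: any deformation factors through $\eta$, so every structure cochain of $E$ kills $K$. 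This sidesteps the obstruction calculus entirely and only uses the smoothness of $\mor(H,-)\to\defm{\fm M}$, which makes the argument shorter and, in my view, more robust; what it does not give is the explicit description of $K_n$ and of $H$ that the paper wants anyway for its subsequent example. Two small points of hygiene. First, the sign: with the paper's conventions the associativity of $\eta_R$ amounts to $-d(\alpha_{il})=\widetilde\alpha_{il}$, whereas the defining system satisfies $+d(\alpha_{il})=\widetilde\alpha_{il}$; so you should define $\eta_R$ using the structure cochains $\psi^{ij}$ of $E$ (which satisfy $-d(\psi^{ij})=\widetilde\psi_{ij}$, including for $(i,j)=(1,r)$) — then $e_1M_R$ is literally $E$ — and deduce $\alpha_{ij}(K)=0$ from $\psi^{ij}(K)=0$ since the two differ by the nonzero scalar $(-1)^{j-i+1}$. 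You flag this, and it is pure bookkeeping. Second, versality only gives $\eta_R$ equal to $u^*\circ\eta$ up to conjugation by an automorphism of $(R_{ij}\otimes_k M_j)$ congruent to the identity modulo $J(R)$, but conjugating the zero endomorphism still gives zero, so $\eta_R(K)=0$ and the componentwise conclusion stands.
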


Let $A$ be a finite dimensional $k$-algebra and let $\fm M$ be the family of
all simple right $A$-modules, up to ismorphism. Then $\fm M$ is a swarm, and
we may consider the versal morphism $\eta: A \to \mathcal O(\fm M)$. If $k$
is algebraically closed, then the versal morphism $\eta$ is injective by
Corollary 3.1 in Laudal \cite{laud02}. Using Lemma \ref{l:ek}, we generalize
this result:

\begin{proposition} \label{p:eta-inj}
If $A$, considered as a right $A$-module, is an iterated extension of a swarm
$\fm M$, then the versal morphism $\eta: A \to \mathcal O(\fm M)$ is
injective. In particular, $\eta$ is injective when $A$ is a finite dimensional
algebra and $\fm M$ is the family of simple right $A$-modules.
\end{proposition}
\begin{proof}
If $A$ is an iterated extension of $\fm M$, then $1 \cdot K = 0$ by Lemma
\ref{l:ek}, and this implies that $K = 0$. If $A$ is finite dimensional, then
the right $A$-module $A$ has finite length, and it is an iterated extension
of the simple modules.
\end{proof}

We remark that our proof, based on Lemma \ref{l:ek}, holds whenever there is
an element $e \in E$ such that $a \mapsto e \cdot a$ defines an injective
right $A$-module homomorphism $A \to E$. This means that $\eta: A \to
\mathcal O(\fm M)$ is injective if there is an iterated extension $E$ of
$\fm M$ such that $E$ contains a copy of $A_A$.

\section{The Generalized Burnside Theorem}

Let $A$ be a finite dimensional $k$-algebra, and let $\fm M = \{ M_1, \dots,
M_r \}$ be the family of simple right $A$-modules, up to isomorphism. Then
$\fm M$ is a swarm, and we consider the versal morphism $\eta: A \to \mathcal
O(\fm M)$ and the commutative diagram
\begin{equation*}
\xymatrix{
A \ar[rrr]^-{\eta} \ar[rrrd]_{\rho} &&& ( H_{ij} \otimes_k \hmm_k(M_i,M_j) )
\ar[d] \\
&&& \osum_{i=1}^r \enm_k(M_i)
}
\end{equation*}
Clearly, $\rho$ factors through $A/\rad(A)$, and if $\enm_A(M_i) = k$ for $1
\le i \le r$, then $A/\rad(A) \to \osum_i \, \enm_k(M_i)$ is an isomorphism
by the Artin-Wedderburn theory for semisimple algebras. This proves the
Classical Burnside Theorem mentioned in the introduction. By Theorem 3.4 in
Laudal \cite{laud02}, the versal morphism $\eta: A \to \mathcal O(\fm M)$ is
an isomorphism when $k$ is algebraically closed. We generalize this result:

\begin{theorem}
Let $A$ be a finite dimensional $k$-algebra and let $\fm M$ be the family of
simple right $A$-modules, up to isomorphism. Then $\eta: A \to \mathcal O(\fm
M)$ is injective, and it is an isomorphism if $\enm_A(M_i) = k$ for $1 \le i
\le r$. In particular, the versal morphism $\eta: A \to \mathcal O(\fm M)$ is
an isomorphism if $k$ is algebraically closed.
\end{theorem}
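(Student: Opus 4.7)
Injectivity of $\eta$ is already in hand: since $A$ is finite dimensional, the right $A$-module $A$ has finite length, so its composition series exhibits $A$ as an iterated extension of the simple right $A$-modules, i.e., of the swarm $\fm M$. Corollary~\ref{c:eta-inj} therefore yields injectivity.

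For the isomorphism statement, assume $\enm_A(M_i)=k$ for all $i$. The map $\eta$ is filtered, $\eta(\rad(A))\subseteq J$, and the induced map on semisimple quotients $\gr_0(\eta):A/\rad(A)\to\mathcal O(\fm M)/J$ is the Classical Burnside isomorphism noted at the start of the section. The plan is to show, more generally, that every graded piece
\begin{equation*}
\gr_n(\eta):\rad(A)^n/\rad(A)^{n+1}\longrightarrow J^n/J^{n+1}
\end{equation*}
is surjective. For $n\ge 1$ this is where the machinery of Section~\ref{s:gmp} enters: the description of $H$ as $\infdef/a$, with $a$ generated by the generalized matric Massey products coming from $o:\obstrdef\to\infdef$, presents $J^n/J^{n+1}$ as $\ta^n(V^1)_{ij}\otimes_k\hmm_k(M_i,M_j)$ modulo the image of $M^o_n$. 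For each class surviving in this quotient, Proposition~\ref{p:itext-mmp} produces an iterated extension of $\fm M$ realizing it, and such an extension can be located as a subquotient of $A$ itself, supplying the required preimage in $\rad(A)^n/\rad(A)^{n+1}$.

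Surjectivity of all the $\gr_n(\eta)$ propagates, by a routine diagram chase on the short exact sequences of radical filtrations, to surjectivity of $\eta_n:A/\rad(A)^{n+1}\twoheadrightarrow\mathcal O(\fm M)/J^{n+1}$ for every $n\ge 0$. Choosing $N$ with $\rad(A)^N=0$, the maps $\eta_n$ with $n\ge N$ become surjections $A\twoheadrightarrow\mathcal O(\fm M)/J^{n+1}$, and in particular $\dim_k\mathcal O(\fm M)/J^{n+1}\le\dim_k A<\infty$. These dimensions form a weakly increasing sequence bounded by $\dim_k A$, so they stabilize; once $\dim_k\mathcal O(\fm M)/J^n=\dim_k\mathcal O(\fm M)/J^{n+1}$ we have $J^n=J^{n+1}$, and $J$-adic completeness of $\mathcal O(\fm M)$ forces $J^n\subseteq\bigcap_m J^m=0$. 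Hence $\mathcal O(\fm M)$ is finite dimensional, $\eta$ is surjective, and combined with the injectivity already established, $\eta$ is an isomorphism. The algebraically closed case follows from Schur's lemma.

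The main obstacle is the surjectivity of $\gr_n(\eta)$ for $n\ge 2$: one must exhibit, for each nonzero generalized matric Massey product class on the $\mathcal O(\fm M)$-side, an iterated extension of $\fm M$ realizing it inside $A$. This is the step where the description of the obstructions coming from $o:\obstrdef\to\infdef$ at the end of Section~\ref{s:gmp} must be matched precisely with the Massey-product obstructions for iterated extensions from Section~\ref{s:mmp}.
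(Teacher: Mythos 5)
There is a genuine gap, and it sits exactly where you flag your ``main obstacle.'' The key fact you are missing is that you never need surjectivity of $\gr_n(\eta)$ for $n\ge 2$ as a separate input: the hull $H$ is by construction a quotient of the completed tensor algebra $\infdef=\ta(V^1)^{\widehat{\ }}$ on $V^1=(\ext^1_A(M_i,M_j)^*)$, so the associated graded algebra of $\mathcal O(\fm M)$ is generated over $\gr_0$ by $\gr_1$; that is, $J^n/J^{n+1}$ is spanned by products of classes from $J/J^2$. Consequently, surjectivity of $\gr_0(\eta)$ (the Classical Burnside Theorem) and of $\gr_1(\eta)$ already forces surjectivity of every $\gr_n(\eta)$, and with both algebras complete this gives surjectivity of $\eta$. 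Your plan to realize each surviving generalized Massey product class in $J^n/J^{n+1}$ by an iterated extension ``located as a subquotient of $A$'' is not carried out, and it is also unnecessary; by leaving it as an acknowledged obstacle you have not produced a proof.

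Moreover, even the $n=1$ step --- the only one that genuinely requires work --- is missing from your argument. What has to be shown is that $\gr_1(\eta):J(A)/J(A)^2\to J/J^2\cong(\ext^1_A(M_i,M_j)^*\otimes_k\hmm_k(M_i,M_j))$ is surjective (in fact an isomorphism). The content here is the identification $\ext^1_A(M_i,M_j)\cong\hmm_{A\text{-}A}(J(A)/J(A)^2,\hmm_k(M_i,M_j))$, proved by splitting off a semisimple subalgebra $S$ with $A=J(A)\oplus S$ via Wedderburn's Principal Theorem (available because $A/J(A)\cong\osum_i\enm_k(M_i)$ is separable under the hypothesis $\enm_A(M_i)=k$), followed by the bimodule decomposition $e_i(J(A)/J(A)^2)e_j\cong W_{ij}\otimes_k\hmm_k(M_i,M_j)$ and the computation $\ext^1_A(M_i,M_j)\cong W_{ij}^*$. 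This is where the hypothesis $\enm_A(M_i)=k$ enters beyond degree zero; your sketch never engages with it. Your concluding argument (stabilization of $\dim_k\mathcal O(\fm M)/J^{n+1}\le\dim_k A$ forcing $J^N=0$ and finite dimensionality) is fine and is a reasonable substitute for the paper's appeal to completeness, but it only functions once the surjectivity of the graded pieces is actually established.
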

\begin{proof}
By Proposition \ref{p:eta-inj}, the versal morphism $\eta$ is injective, and
it is enough to prove that $\eta$ is surjective when $\enm_A(M_i) = k$ for $1
\le i \le r$. Note that $\eta$ maps the Jacobson radical $\rad(A)$ of $A$ to
the Jacobson radical $J = (\rad(H)_{ij} \otimes_k \hmm_k(M_i,M_j))$ of
$\mathcal O(\fm M)$. Moreover, $A$ is $\rad(A)$-adic complete since it is
finite dimensional, and $\mathcal O(\fm M)$ is clearly $J$-adic complete. By
a standard result for filtered algebras, it is therefore sufficient to show
that $\gr_1(\eta): J(A)/J(A)^2 \to J/J^2$ is surjective, since $\gr_0(\eta):
A/J(A) \to \osum_i \, \enm_k(M_i)$ is an isomorphism by the Classical
Burnside Theorem. We notice that
    \[ J/J^2 \cong ( ( J(H)/J(H)^2 )_{ij} \otimes_k \hmm_k(M_i,M_j) )
    \cong ( \ext^1_A(M_i,M_j)^* \otimes_k \hmm_k(M_i,M_j) ) \]
since $J(H)/J(H)^2$ is the dual of the tangent space $(\ext^1_A(M_i,M_j))$ of
$\defm{\fm M}$. We note that Lemma 3.7 in Laudal \cite{laud02} holds over
any field. Hence the map
    \[ J(A)/J(A)^2 \to (\ext^1_A(M_i,M_j)^* \otimes_k \hmm_k(M_i,M_j)) \]
induced by $\eta$ is an isomorphism, and this completes the proof.
\end{proof}

\section{The closure property}

Let $A$ be a finitely generated $k$-algebra of the form $A = k\langle x_1,
\dots x_d \rangle/I$, and let $\fm M = \{ M_1, \dots, M_r \}$ be a family of
finite dimensional right $A$-modules. Then $\fm M$ is a swarm, since
    \[ \dim_k \ext^1_A(M_i,M_j) \le \dim_k \der_k(A,\hmm_k(M_i,M_j)) \le
    \dim_k \hmm_k(M_i,M_j)^d \]
The last inequality
follows from the fact that any derivation $D: A \to \hmm_k(M_i,M_j)$ is
determined by $D(x_l) \in \hmm_k(M_i,M_j)$ for $1 \le l \le d$. We consider
the algebra of observables $B = \mathcal O(\fm M)$ of the swarm $\fm M$,
and write $\eta: A \to B$ for its versal morphism. In general, $\fm M = \{
M_1, \dots, M_r \}$ is a family of right $B$-modules via $\eta$.

\begin{lemma}
The family $\fm M = \{ M_1, \dots, M_r \}$ of right $B$-modules is the
simple right $B$-modules, and it is swarm of $B$-modules.
\end{lemma}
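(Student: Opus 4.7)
The lemma asserts three things about $\fm M = \{M_1,\dots,M_r\}$ viewed as right $B$-modules: (i) each $M_i$ is simple, (ii) every simple right $B$-module is isomorphic to some $M_i$, and (iii) $\ext^1_B(M_i, M_j)$ is finite dimensional for all $i,j$. I would build the argument around the structural fact that $B = (H_{ij}\widehat\otimes_k\hmm_k(M_i,M_j))$ is complete in its $J$-adic topology, where $J = (\rad(H)_{ij}\widehat\otimes_k\hmm_k(M_i,M_j))$, and that the quotient $\overline B := B/J \cong \osum_i \enm_k(M_i)$ is precisely the semisimple algebra appearing as the bottom arrow of the commutative diagram defining $\eta$. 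The first step is to verify $\rad(B) = J$: completeness gives $1+J \subseteq B^\times$ via convergent geometric series, so $J \subseteq \rad(B)$, and the semisimplicity of $\overline B$ gives the reverse inclusion.

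For (i), I would observe that the $B$-action on $M_i$ factors as $B \twoheadrightarrow \overline B \twoheadrightarrow \enm_k(M_i)$, a surjection onto a simple Artinian algebra whose unique simple module is $M_i$; hence $M_i$ is simple over $B$. The Schur sandwich $k \subseteq \enm_B(M_i) \subseteq \enm_A(M_i) = k$ then yields $\enm_B(M_i) = k$. For (ii), any simple right $B$-module is annihilated by $\rad(B) = J$, hence pulled back from a simple $\overline B$-module, and the simple modules of $\osum_i \enm_k(M_i)$ are exactly $M_1,\dots,M_r$.

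The main content is (iii), and the trick I would use is that every short exact sequence $0 \to M_j \to E \to M_i \to 0$ of $B$-modules automatically satisfies $J^2 E = 0$: since $E/M_j \cong M_i$ is killed by $J$ we have $JE \subseteq M_j$, and $JM_j = 0$ then forces $J^2 E = 0$. This yields the change-of-rings identification $\ext^1_B(M_i, M_j) \cong \ext^1_{B/J^2}(M_i, M_j)$. The quotient $B/J^2$ is finite dimensional over $k$, since $\overline B$ is finite dimensional and $J/J^2 \cong \osum_{i,j} \ext^1_A(M_i, M_j)^* \otimes_k \hmm_k(M_i, M_j)$ is finite dimensional --- the latter via the identification of $\rad(H)/\rad(H)^2$ with the dual of the tangent space of $\defm{\fm M}$ (recalled in the proof of the Generalized Burnside Theorem) together with the swarm hypothesis for $\fm M$ as $A$-modules. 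Since $\ext^1$ between finite dimensional modules over a finite dimensional $k$-algebra is always finite dimensional, (iii) follows.

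The step that warrants the most care is the change-of-rings isomorphism in (iii): one must confirm that $\ext^1_B(M_i, M_j) \cong \ext^1_{B/J^2}(M_i, M_j)$ respects equivalence of extensions, not just the underlying module structures. This is immediate from the fact that $J^2$ annihilates any $B$-extension of $M_i$ by $M_j$, so any $B$-linear map between two such extensions is automatically $B/J^2$-linear, but it deserves an explicit sentence in the write-up.
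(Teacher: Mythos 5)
Your proposal is correct and follows essentially the same approach as the paper: the simple modules are identified via $\overline B \cong \osum_i \enm_k(M_i)$ and Artin-Wedderburn, and the swarm condition is obtained by reducing $\ext^1_B(M_i,M_j)$ to the finite-dimensional algebra $B/J^2$. The only cosmetic difference is that the paper performs the $J^2$-reduction in the language of derivations (a derivation $D\colon B \to \hmm_k(M_i,M_j)$ automatically satisfies $D(J^2) = JD(J) + D(J)J = 0$) while you phrase it in terms of extensions; and your aside that $\enm_B(M_i) = k$ is not part of the lemma and anyway follows directly from the surjection $B \twoheadrightarrow \enm_k(M_i)$, without the Schur sandwich through $\enm_A(M_i) = k$, a hypothesis the lemma does not assume.
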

\begin{proof}
It follows from the Artin-Wedderburn theory that $\fm M = \{ M_1, \dots, M_r
\}$ is the family of simple modules over
    \[ \overline B = B/J(B) \cong ( H/J(H) \otimes_k \hmm_k(M_i,M_j)) \cong
    \osum_i \, \enm_k(M_i). \]
Since $B$ and $\overline B = B/J(B)$ have the same simple modules, it follows
that $\fm M$ is the family of simple right $B$-modules. We have that
$\ext^1_B(M_i,M_j)$ is a quotient of $\der_k(B,\hmm_k(M_i,M_j))$, and any
derivation $D: B \to \hmm_k(M_i,M_j)$ satisfies $D(J^2) = J D(J) + D(J) J =
0$ when $J = J(B)$ since $\fm M$ is the family of simple $B$-modules. From
the fact that
    \[ B/J^2 \cong ( (H/\rad(H)^2)_{ij} \otimes_k \hmm_k(M_i,M_j) ) \]
is finite dimensional, and in particular a finitely generated $k$-algebra, it
follows from the argument preceding the lemma that $\fm M$ is a swarm of
$B$-modules.
\end{proof}

In this situation, we may iterate the process. Since $\fm M$ is a swarm of
right $B$-modules, the noncommutative deformation functor $\defm{\fm M}^B$
of $\fm M$, considered as a family of right $B$-modules, has a miniversal
pro-couple $(H^B, M_H^B)$. We write $\mathcal O^B(\fm M) = \enm_{H^B}(M_H^B)
\cong ( H_{ij}^B \otimes_k \hmm_k(M_i,M_j) )$ for its algebra of observables
and $\eta^B: B \to \mathcal O^B(\fm M)$ for its versal morphism.

\begin{theorem} \label{t:defm-closure}
Let $A$ be a finitely generated $k$-algebra, let $\fm M = \{ M_1, \dots, M_r
\}$ be a family of finite dimensional $A$-modules, and let $B = \mathcal
O(\fm M)$. Then the versal morphism $\eta^B: B \to \mathcal O^B(\fm M)$ of
$\fm M$, considered as a family of right $B$-modules, is an isomorphism.
\end{theorem}
\begin{proof}
Since $\fm M$ is a swarm of $A$-modules and of $B$-modules, we may consider
the commutative diagram
\begin{equation*}
\xymatrix{
A \ar[r]^-{\eta} \ar[rd]_-{\rho} & B = \mathcal O(\fm M)
\ar[d] \ar[r]^-{\eta^B} & C = \mathcal O^B(\fm M) \ar[dl] \\
& \displaystyle \osum_i \, \enm_k(M_i) &
}
\end{equation*}
The algebra homomorphism $\eta^B$ induces maps $B/\rad(B)^n \to C/\rad(C)^n$
for all $n \ge 1$, and it is enough to show that each of these induced maps
is an isomorphism. For $n = 1$, we have
    \[ B/J(B) \cong C/J(C) \cong \osum_i \, \enm_k(M_i) \]
so it is clearly an isomorphism for $n = 1$. For $n \ge 2$, we have that $B_n
= B/J(B)^n$ is a finite dimensional algebra with the same simple modules as
$B$ since $M_i J^n = 0$. We may therefore consider the versal morphism of
the swarm $\fm M$ of right $B_n$-modules, which is an isomorphism by the
Generalized Burnside Theorem since $\enm_B(M_i) = k$ for $1 \le i \le r$.
Finally, any derivation $D: B \to \hmm_k(M_i,M_j)$ satisfies $D(J^n) = 0$
when $n \ge 2$. Therefore, we have that
    \[ \ext^1_{B_n}(M_i,M_j) \cong \ext^1_B(M_i,M_j) \]
and this implies that $B/J(B)^n \to C/J(C)^n$ coincides with the versal
morphism of the swarm $\fm M$ of right $B_n$-modules. It is therefore an
isomorphism.
\end{proof}

Theorem \ref{t:defm-closure} implies that the assignment $(A, \fm M) \mapsto
(B, \fm M)$ is a closure operation when $A$ is a finitely generated
$k$-algebra and $\fm M = \{ M_1, \dots, M_r \}$ is a family of finite
dimensional right $A$-modules. In other words, the algebra $B = \mathcal
O(\fm M)$ has the following properties:
\begin{enumerate}
    \item The family $\fm M$ is the family of simple right $B$-modules.
    \item The family $\fm M$ has exactly the same module-theoretic
        properties, in terms of extensions and matric Massey products,
        considered as a family of $B$-modules and as a family of $A$-modules.
\end{enumerate}
Moreover, these properties characterize the algebra of observables $B =
\mathcal O(\fm M)$.

\begin{remark}
Assume that $k$ is a field that is not algebraically closed. When $A$ is a
finite dimensional $k$-algebra and $\fm M$ is the family of simple right
$A$-modules, it could happen that the division algebra $D_i = \enm_A(M_i)$
has dimension $\dim_k D_i > 1$ for some simple $A$-modules $M_i$. In this
case, $\eta: A \to \mathcal O(\fm M)$ is not necessarily an isomorphism.
However, if the subfamily $\fm M' = \{ M_i: \enm_A(M_i) = k \} \subseteq
\fm M$ is non-empty, we may consider the algebra $B = \mathcal O(\fm M')$,
and it follows from the closure property that $\eta: B \to \mathcal
O^B(\fm M')$ is an isomorphism. This means that the Generalized Burnside
Theorem holds for the family $\fm M'$ of right $B$-modules.
\end{remark}

\section{Noncommutative localizations via the algebra of observables}
\label{s:ncloc}

Let $A$ be a finitely generated $k$-algebra, and denote by $X = \Simp(A)$ the
set of (isomorphism classes of) simple finite dimensional right $A$-modules.
For any $s \in A$, we write
    \[ D(s) = \{ M \in X: M \xrightarrow{\cdot s} M \text{ is invertible} \}
    \subseteq X. \]
We note that $\{ D(s) \}_{s \in A}$ is a base for a topology on $X$, since
$D(s) \cap D(t) = D(st)$, which we call the \emph{Jacobson topology} on $X =
\Simp(A)$.

For any inclusion $\fm M \subseteq \fm M'$ of finite subsets of $D(s)$, there
is a surjective algebra homomorphism $\mathcal O(\fm M') \to \mathcal O(\fm
M)$. We may consider the algebra homomorphism
    \[ \eta_s: A \to \varprojlim_{\fm M \subseteq D(s)} \mathcal O(\fm M) \]
where the projective limit is taken over all finite subsets $\fm M \subseteq
D(s)$. Notice that $\eta_s(s)$ is a unit, since it is a unit in $\mathcal
O(\fm M)$ for any finite subset $\fm M \subseteq D(s)$. We define $A_s$ to be
the subring of the projective limit
    \[ \varprojlim_{\fm M \subseteq D(s)} \mathcal O(\fm M) \]
generated by $\eta_s(A)$ and $\eta_s(s)^{-1}$. By abuse of notation, we write
$\eta_s$ for the algebra homomorphism $\eta_s: A \to A_s$ into the subring
$A_s$.

Let $S$ be the multiplicative subset $S = \{ 1, s, s^2, \dots \} \subseteq
A$. Then $\eta_s: A \to A_s$ is an $S$-inverting algebra homomorphism, and it
has the following universal property: If $\phi: A \to B$ is any $S$-inverting
algebra homomorphism, then there is a unique algebra homomorphism $\phi_s:
A_s \to B$ such that $\phi_s \circ \eta_s = \phi$. We remark that $A_s$ is a
finitely generated $k$-algebra, generated by the images of the generators of
$A$ and $\eta_s(s)^{-1}$. In general, it is not a (left or right) ring of
fractions.

\section{Applications}

Let $A$ be a finite dimensional $k$-algebra. We consider the family $\fm M =
\{ M_1, \dots, M_r \}$ of simple right $A$-modules. By the Generalized
Burnside Theorem, $A$ can be written in \emph{standard form} as
    \[ A \cong \im(\eta) \subseteq ( H_{ij} \otimes_k \hmm_k(M_i,M_j) ) =
    \mathcal O(\fm M) \]
If $\enm_A(M_i) = k$ for $1 \le i \le r$, then the standard form of $A$ is
$A \cong \mathcal O(\fm M)$, and in general, it is a subalgebra of $\mathcal
O(\fm M)$.

The standard form can, for instance, be used to compare finite dimensional
algebras and determine when they are isomorphic. Let us illustrate this with
a simple example. Let $k$ be a field, and let $A = k[G]$ be the group algebra
of $G = \mathbb Z_3$. In concrete terms, we have that $A \cong k[x]/(x^3-1)$,
and over a fixed algebraic closure $\overline k$ of $k$, we have that
    \[ x^3 - 1 = (x-1)(x^2 + x + 1) = (x-1)(x-\omega)(x-\omega^2) \]
with $\omega \in \overline k$. If $\fchar(k) \neq 3$ and $\omega \in k$, then
the simple $A$-modules are given by $\fm M = \{ M_0, M_1, M_2 \}$, where $M_i
= A/(x-\omega^i)$. Furthermore, a calculation shows that $\ext^1_A(M_i,M_j) =
0$ for $0 \le i,j \le 2$. Hence, the noncommutative deformation functor
$\defm{\fm M}$ has a pro-representing hull $H = k^3$ (it is rigid), and the 
versal morphism $\eta: A \to \mathcal O(\fm M)$ is an isomorphism. The
standard form of $A$ is therefore given by
    \[ A = k[\mathbb Z_3] \cong k^3 = \begin{pmatrix} k & 0 & 0 \\ 0 & k
    & 0 \\ 0 & 0 & k \end{pmatrix}. \]
If $\fchar(k) = 3$, then $M_0$ is the only simple $A$-module since $x^3-1 =
(x-1)^3$, and we find that $\ext^1_A(M_0,M_0) = k$. In this case, it turns
out that $H \cong k[[t]]/(t^3)$, and the standard form of $A$ is given by
$A = k[\mathbb Z_3] \cong k[t]/(t^3)$. In both cases, it follows from the
Generalized Burnside Theorem that $\eta$ is an isomorphism, since $\enm_A(M)
= k$ for all the simple $A$-modules $M$.

If $\fchar(k) \neq 3$ and $\omega \not \in k$, then the simple $A$-modules are 
given by $\fm M = \{ M, N \}$, where $M = M_0 = A/(x-1)$ is $1$-dimensional,
and $N = A/(x^2 + x + 1) \cong k(\omega) = K$ is $2$-dimensional. In this case,
we have that $\enm_A(M) = k$ and $\enm_A(N) = K$, and we find that the
standard form of $A$ is given by
    \[ H = \begin{pmatrix} k & 0 \\ 0 & k \end{pmatrix} \quad \Rightarrow
    \quad A \cong \im(\eta) = \begin{pmatrix} k & 0 \\ 0 & K \end{pmatrix}
    \subseteq \mathcal O(\fm M) = \begin{pmatrix} k && 0 \\ 0 && \enm_k(K)
    \end{pmatrix}. \]
It follows from Proposition \ref{p:eta-inj} that $\eta: A \to \mathcal
O(\fm M)$ is injective. However, it is not an isomorphism in this case.

\bibliographystyle{plain}
\bibliography{eeriksen}

\end{document}